\newtheorem{theorem}{Theorem}
\newtheorem{lemma}[theorem]{Lemma}
\newtheorem{corollary}[theorem]{Corollary}
\newtheorem*{problem*}{Problem}
\theoremstyle{definition}
\newtheorem{remark}[theorem]{Remark}
\renewcommand{\thetheorem}{\arabic{theorem}}
\newcommand{\rk}{\operatorname{rk}}
\begin{document}

\title{On the rank of $\mathbb{Z}_2$-matrices with free entries on the diagonal}

\author{E. Kogan
  \footnote{
    We would like to thank
    R. Fulek,
    B. Mohar,
    V. Podolskii,
    D. Pozdeev,
    A. Skopenkov,
    I. Soloveychik,
    M. Tancer,
    and D. Trushin
    for helpful discussions and suggestions.
  }}
\date{}

\maketitle

\begin{abstract}

    For an $n \times n$ matrix $M$ with entries in $\mathbb{Z}_2$
    denote by $R(M)$ the minimal rank of all the matrices
    obtained by changing some numbers on the main diagonal of $M$.
    We prove that for each non-negative integer $k$
    there is a polynomial in $n$ algorithm deciding whether $R(M) \leq k$
    (whose complexity may depend on $k$).
    We also give a polynomial in $n$ algorithm
    computing a number $m$ such that $m/2 \leq R(M) \leq m$.
    These results have applications to graph drawings on non-orientable surfaces.

\end{abstract}


For an $n \times n$ matrix $M$ with entries in $\mathbb{Z}_2$
denote by $R(M)$ the minimal rank of all the matrices
obtained by changing some numbers on the main diagonal of $M$.
We present two algorithms estimating $R(M)$. These results
(Theorems \ref{main} and \ref{main-approx})
have applications to graph drawings on non-orientable surfaces,
see Appendix A.
For a brief overview of the history of the problem, see Remark \ref{history}.




Denote by $M_n(\mathbb{Z}_2)$ the set of all $n \times n$ matrices with entries in $\mathbb{Z}_2$.

\begin{theorem}\label{main}
    Let $k$ be a fixed non-negative integer.
    There is an algorithm with the complexity of $O(n^{k + 4})$ deciding
    for an arbitrary matrix $M \in M_n(\mathbb{Z}_2)$
    whether $R(M) \leq k$.
\end{theorem}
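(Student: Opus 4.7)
The idea is to guess, for a hypothetical witness $M' \in M_n(\mathbb{Z}_2)$ of $R(M) \le k$, a set of $k$ rows of $M'$ whose span contains every row of $M'$, together with the values of $M'$ on the diagonal inside those rows; the guess can then be checked by a linear-algebra feasibility test. If $\rk(M') \le k$ then some $k$ rows of $M'$ span its row space (pad with arbitrary rows if the rank is strictly less than $k$), so it suffices to enumerate all $\binom{n}{k} = O(n^k)$ subsets $I = \{i_1, \dots, i_k\} \subseteq \{1, \dots, n\}$ and, for each $I$, all $2^k$ assignments $(d_{i_1}, \dots, d_{i_k}) \in \mathbb{Z}_2^k$ of the corresponding diagonal entries. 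Any such pair $(I, d_I)$ determines the $k$ putative basis rows $r_{i_1}, \dots, r_{i_k}$ of $M'$ explicitly: they agree with the rows of $M$ outside the positions $(i_l, i_l)$ and take the guessed values there.

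For each such $(I, d_I)$, I check, for every $i \notin I$, whether row $i$ of $M'$ can be written as $\sum_{l=1}^{k} c_l\, r_{i_l}$ for some $c \in \mathbb{Z}_2^k$. The key observation is that the only coordinate of row $i$ of $M'$ which is not already determined by $M$ is the diagonal entry at column $i$. Hence the constraint on $c$ is precisely the linear system of $n-1$ equations $M_{ij} = \sum_l c_l\, (r_{i_l})_j$ indexed by $j \ne i$, while $d_i$ may be set afterwards to $\sum_l c_l\, (r_{i_l})_i$. Solve this system by Gaussian elimination; if it is consistent for every $i \notin I$, then the reconstructed matrix $M'$ has all rows in the span of the $r_{i_l}$, so $\rk(M') \le k$, and the algorithm outputs YES. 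If no choice of $(I, d_I)$ works, it outputs NO.

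Correctness in the reverse direction is immediate: any witness $M'$ of $R(M) \le k$ furnishes at least one pair $(I, d_I)$ for which all $n - k$ checks succeed. The total cost is $O(n^k) \cdot O(2^k) \cdot O(n) \cdot O(n k^2) = O(n^{k+2})$ for fixed $k$, comfortably within the claimed $O(n^{k+4})$ bound.

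The main conceptual point is the decoupling observation in the second paragraph: the single free diagonal entry of a row $i \notin I$ sits exactly at the coordinate $j = i$, which is precisely the one we do not need to constrain when writing row $i$ as a combination of the basis rows. Once this is noticed, the remainder of the argument is an exhaustive search guided by standard linear algebra over $\mathbb{Z}_2$.
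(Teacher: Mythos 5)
Your proposal is correct, but it takes a genuinely different route from the paper's. The paper first completes $M$ to a non-degenerate matrix $M'$ (Lemma~\ref{non-degenerate-algo}, a greedy choice of diagonal entries forcing every corner minor to equal $1$), writes every completion as $M'+D$ with $D$ diagonal, and uses $\rk(M'+D)\ge\rk M'-\rk D=n-\rk D$ (Lemma~\ref{rk-est}) to conclude that only the $O(n^k)$ diagonal matrices $D$ with at most $k$ zeroes on the diagonal can produce rank $\le k$; it then computes $\rk(M'+D)$ for each candidate in $O(n^3)$, giving $O(n^{k+4})$. You instead guess $k$ spanning rows of a hypothetical low-rank completion together with their diagonal entries, and exploit the decoupling observation that the unique undetermined coordinate of any other row $i$ is exactly the coordinate $j=i$ that need not be matched, so the check reduces to independent per-row linear feasibility tests (the free entries $d_i$ for $i\notin I$ never conflict, since each occurs in only one row). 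Both searches have size $O(n^k)$, but your per-candidate check is cheaper, giving $O(n^{k+2})$, which is within (indeed better than) the stated bound; your argument is also self-contained, using neither of the paper's lemmas. What the paper's route buys is reuse: the non-degenerate completion of Lemma~\ref{non-degenerate-algo} is the engine of the approximation algorithm of Theorem~\ref{main-approx}, which your basis-guessing argument does not obviously yield. The only points you leave implicit are minor: the trivial case $n<k$ (where the answer is always YES, since $R(M)\le n$) and the padding of a row basis up to size $k$ when $\rk M'<k$, both of which are easily supplied.
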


The proof of Theorem \ref{main} uses Lemma \ref{non-degenerate-algo} and well known Lemma \ref{rk-est}.

\begin{theorem}\label{main-approx}
    There is an algorithm with the complexity of $O(n^4)$
    calculating
    for an arbitrary matrix $M \in M_n(\mathbb{Z}_2)$
    a number $k$ such that
    \begin{equation*}
        k / 2 \leq R(M) \leq k.
    \end{equation*}
\end{theorem}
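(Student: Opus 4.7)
The plan is to compute and output $k := 2\,r^*(M)$, where
\[
r^*(M) := \max\bigl\{r : \exists\, I, J \subseteq [n],\ |I| = |J| = r,\ I \cap J = \emptyset,\ \det M[I,J] \ne 0\bigr\}.
\]
Correctness of the desired bound $k/2 \le R(M) \le k$ reduces to the two-sided inequality $r^*(M) \le R(M) \le 2\,r^*(M)$. The left half is immediate: for any diagonal modification $D$ and any witnesses $I, J$ achieving $r^*(M)$, one has $(M+D)[I,J] = M[I,J]$ because $I \cap J = \emptyset$, so $\rk(M+D) \ge r^*(M)$.

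For the right half I would fix witnesses $I, J$ of size $r = r^*(M)$ and set $K := [n] \setminus (I \cup J)$. I would use the Schur complement $S'$ of $M + D$ with respect to the non-singular block $M[I,J]$, which is unaffected by $D$; then $\rk(M+D) = r + \rk(S')$. The key structural input, essentially Lemma~\ref{rk-est}, is that the maximality of $r$ forces every off-diagonal entry of the $K \times K$ block of $S'$ to vanish identically, for otherwise one could enlarge $I, J$ to an $(r+1) \times (r+1)$ disjoint non-singular submatrix of $M$. I would then choose the diagonal entries of $D$ indexed by $K$ to kill the remaining (diagonal) entries in the $K \times K$ block of $S'$, and use the diagonal entries indexed by $I \cup J$ to push the rank of the remaining blocks down to at most $r$, giving $\rk(M+D) \le 2r$.

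The main obstacle is this last rank reduction: once the $K \times K$ block of $S'$ is zero, $S'$ has the block-triangular form $\begin{pmatrix} \alpha' & \beta' \\ \gamma' & 0 \end{pmatrix}$, whose naive rank bound is only $2r$. The extra factor-of-two saving must be extracted from the affine structure that the surviving diagonal modifications impose on $\alpha', \beta', \gamma'$ through the inverse of $M[I,J]$. For complexity, $r^*(M)$ can be computed in $O(n^4)$ time by maintaining a growing pair of disjoint index sets together with an LU-style factorisation of the corresponding submatrix, and augmenting via matroid-intersection augmenting paths; each of the at most $n/2$ augmentation steps costs $O(n^3)$ arithmetic operations over $\mathbb{Z}_2$.
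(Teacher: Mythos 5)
Your proposal takes a genuinely different route from the paper, and it has two serious gaps, one of which you flag yourself. For comparison, the paper's argument is much shorter: it greedily builds a \emph{non-degenerate} diagonal completion $M'$ by forcing every corner minor to equal $1$ (Lemma \ref{non-degenerate-algo}), outputs $k=\rk(M'+I)$, and obtains both inequalities from $2\rk(M'+D)=\rk(M'+D)+\rk\bigl((M'+I)+(I+D)\bigr)\ge(n-\rk D)+\bigl(\rk(M'+I)-(n-\rk D)\bigr)=\rk(M'+I)$, using only Lemma \ref{rk-est}. Your lower bound $R(M)\ge r^*(M)$ is correct, but the whole proposal stands or falls on the upper bound $R(M)\le 2r^*(M)$, and you do not prove it. The one-row/one-column extensions of a maximal witness $(I,J)$ that keep the index sets disjoint are exactly those with both new indices in $K$, so maximality only forces the off-diagonal entries of the $K\times K$ block of the Schur complement to vanish; after killing its diagonal with $D|_K$ you are left with $S'=\begin{pmatrix}\alpha'&\beta'\\ \gamma'&0\end{pmatrix}$ and hence only $\rk(M+D)\le r+\rk S'\le 3r$. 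The ``factor-of-two saving'' you say ``must be extracted'' from the remaining $2r$ diagonal bits on $I\cup J$ is precisely the missing theorem: those bits perturb $\alpha',\beta',\gamma'$ through $M[I,J]^{-1}$ (with cross terms) in a way you have not analysed, and nothing you write rules out $\rk S'>r$ for every choice. (Note also that this step owes nothing to Lemma \ref{rk-est}; it is a determinant/Schur-complement identity combined with maximality.) The bound you need is tight if true --- the $3$-cycle matrix has $r^*=1$ and $R=2$ --- so there is no slack to absorb a weaker estimate.

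The second gap is the complexity claim. You need to compute $r^*(M)=\max_{T\subseteq[n]}\rk M[T,[n]\setminus T]$, a maximum over exponentially many bipartitions in which each index must be assigned the role of a row or of a column but not both. This is not a matroid intersection in any evident way: the pairs $(I,J)$ with $M[I,J]$ nonsingular form a linking-system/delta-matroid type structure, and one-step non-extendability of $(I,J)$ does not imply global maximality, so ``augmenting via matroid-intersection augmenting paths'' is an assertion rather than an algorithm. Even if $r^*$ is polynomial-time computable, establishing that is a separate piece of work; the paper, by contrast, needs nothing beyond Gaussian elimination.
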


The proof of Theorem \ref{main-approx} uses Lemma \ref{min-rk-est} which in turn uses well known Lemma \ref{rk-est}.

\begin{remark}
    Let $M$ be an $n \times n$ matrix with entries in $\mathbb{Z}_2$.
    A matrix $M'$ obtained from $M$ by changing some numbers on the main diagonal of $M$
    is called
    \emph{$k$-good} if $\rk M' \leq k$
    and
    \emph{$k$-realising} if $\rk M' = k$.
    The algorithm presented in the proof of Theorem \ref{main}
    can be easily modified to
    calculate the numbers on the main diagonal of a $k$-good matrix
    if such numbers exist.
    The algorithm presented in the proof of Theorem \ref{main-approx}
    can be easily modified to
    calculate the numbers on the main diagonal of a $k$-realising matrix.
\end{remark}

\begin{remark}\label{history}
    %
    %
    There is a related
    {Low-Rank Matrix Completion Problem} (LRMC) which
    has been extensively studied, see survey
    \cite{real-matrix-completion-survey}.

    Consider the following problem which we call $F$-LRMC.

    Let $F$ be a field.
    Let $M$ be an $n \times m$ matrix with entries in $F$.
    Let $\Omega$ be a set of cells of $M$.
    What is the minimal rank of all the $n \times m$ matrices
    obtained from $M$ by changing some entries having indices not in $\Omega$?

    LRMC is a special case of $F$-LRMC with $F = \mathbb{R}$.

    In our paper, a special case of $\mathbb{Z}_2$-LRMC is considered with
    $m = n$ and $\Omega = \overline{\Omega_{diag}} := \{(i, j) \mid i, j \in \{1, \ldots, n\}, i \neq j\}$.

    In \cite{param-matrix-completion},
    $\mathbb{Z}_p$-LRMC is considered.
    In the case of the main diagonal
    having entries to change
    (i.\,e. in the case of $\Omega = \overline{\Omega_{diag}}$)
    the results obtained in \cite{param-matrix-completion}
    give algorithms with larger complexity
    than the algorithm given by Theorem~\ref{main}.

    $\mathbb{Z}_2$-LRMC is NP-hard \cite{np-hardness}.

    In \cite{real-matrix-completion-survey}, LRMC is considered.
    The methods used in \cite{real-matrix-completion-survey}
    give results based on minimizing different approximations of the rank of a matrix,
    for example,
    the sum of the singular values of a matrix \cite{SVD}.
    Thus, none of the results of \cite{real-matrix-completion-survey}
    can be applied to the problem of finding $R(M)$.

    In \cite{random-matrix-completion},
    a problem similar to $\mathbb{Z}_2$-LRMC is considered
    with the difference being
    the assumption that $\Omega$ is formed by choosing cells
    at random independently from each other with the same probability.
    The provided algorithm is probabilistic.

    In \cite{top-matrix-completion}, the following variation of $\mathbb{Z}_2$-LRMC is considered.
    Take a symmetric matrix $M$ with entries in $\mathbb{Z}_2$
    whose rows and columns are indexed by the edges of a connected graph.
    Let $\Omega$ be equal to the set of pairs of independent edges of the graph.
    What is the minimal rank of all \emph{symmetric} matrices
    obtained from $M$ by changing some entries having indices not in $\Omega$?
\end{remark}








\begin{lemma}\label{non-degenerate-algo}
    There is an algorithm with the complexity of $O(n^4)$ which
    for an arbitrary matrix $M \in M_n(\mathbb{Z}_2)$
    finds some numbers from $\mathbb{Z}_2$ to replace the entries on the main diagonal of $M$ with
    such that the resulting matrix $M'$ is non-degenerate.
\end{lemma}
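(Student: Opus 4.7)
My plan is a recursive Schur-complement algorithm driven by the Leibniz expansion of the determinant. Write $M' = M_0 + \diag(d_1,\ldots,d_n)$, where $M_0$ is $M$ with its diagonal zeroed out. Then $\det M'$ is a multilinear polynomial in $d_1,\ldots,d_n$ over $\mathbb{Z}_2$, and the coefficient of its top monomial $d_1 d_2 \cdots d_n$ equals $1$, since that monomial arises only from the identity permutation in the Leibniz formula. The monomials $\prod_{i\in S}x_i$ ($S\subseteq[n]$) form a basis for functions $\{0,1\}^n\to\mathbb{Z}_2$, so having a nonzero coefficient forces $\det M'$ to be nonzero as a function; hence a non-degenerate assignment always exists, and the task is only to find one efficiently.

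I would set $d_1 := 1$ and then add $M_{j,1}$ times row~$1$ to row~$j$ for each $j\geq 2$, in $O(n^2)$ arithmetic operations. Row operations preserve rank, and the result is block-upper-triangular with entry $1$ in position $(1,1)$, zeros in the rest of column~$1$, and an $(n-1)\times(n-1)$ Schur complement $S$ in the lower right. The off-diagonal entries of $S$ are the fixed values $M_{i+1,j+1}+M_{i+1,1}M_{1,j+1}$, while its diagonal entries are $d_{i+1}+M_{i+1,1}M_{1,i+1}$, which remain free in $d_{i+1}$ up to a known additive shift. Thus $S$ is once again an instance of the same ``free-diagonal'' problem; I recurse on it and translate the diagonal values it returns back into $d_2,\ldots,d_n$ by adding the shifts.

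The step to verify is that the inductive hypothesis --- nonzero $d_2\cdots d_n$ coefficient in $\det S$ --- is preserved. This follows directly from the Leibniz expansion of $\det S$: only the identity permutation can involve all of $d_2,\ldots,d_n$ (each $d_{i+1}$ appears solely on the diagonal), and that term contributes $\prod_{i=1}^{n-1}(d_{i+1}+M_{i+1,1}M_{1,i+1})$, whose $d_2\cdots d_n$ coefficient is $1$. The base case $n=1$ is immediate with $d_1=1$. One can also observe that fixing $d_1=0$ would in general destroy the invariant, since then the $d_2\cdots d_n$ coefficient of $\det M'|_{d_1=0}$ equals $(M_0)_{1,1}=0$; hence $d_1=1$ is the uniformly safe choice.

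The running time satisfies $T(n)=T(n-1)+O(n^2)$, giving $O(n^3)$, comfortably within the stated $O(n^4)$ bound. I do not expect any substantive obstacle; the only algebraic point is the invariant check above, a one-line consequence of the Leibniz formula.
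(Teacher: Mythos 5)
Your proof is correct, and at its core it makes the same greedy choice as the paper: both algorithms fix the diagonal entries from top-left to bottom-right so that every leading principal minor of the resulting matrix equals $1$. Indeed, since the previous leading minor is $1$, your pivot at step $i$ equals $\Delta_i + a_i$ in the paper's notation, so setting it to $1$ is exactly the paper's rule $a_i = 1 + \Delta_i$; the two algorithms output identical diagonals. The differences are in the packaging and the cost. The paper recomputes each corner minor from scratch, paying $O(i^3)$ per step for a total of $O(n^4)$, and verifies correctness by a cofactor expansion of $\Delta_{i+1}'$ along the last row of the corresponding submatrix. You instead run a single Gaussian elimination, carrying the free diagonal symbolically through the Schur complements, which costs $O(n^2)$ per step and $O(n^3)$ overall --- a genuine improvement on the stated bound. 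Your correctness argument is also organized differently: the observation that the coefficient of $d_1\cdots d_n$ in the Leibniz expansion is $1$, combined with the fact that multilinear polynomials over $\mathbb{Z}_2$ are determined by their values on $\{0,1\}^n$, gives an existence proof independent of the algorithm, whereas in the paper existence is only a byproduct of the construction. All steps check out; in particular the invariant you single out --- that each successive Schur complement again has top-monomial coefficient $1$ --- is automatic, as you note, because each free variable occurs in exactly one (diagonal) cell, so no recursive call can reach a dead end.
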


\begin{proof}
    The proof consists of two parts: a description and an estimation of the complexity of the algorithm
    and a proof of that the algorithm gives the required numbers.

    \emph{Part 1. Description and estimation of the complexity of the algorithm.}

    Denote by $a_i$ the desired numbers to put on the main diagonal of $M'$.
    Let us calculate these numbers one by one from top-left to bottom-right. Suppose we have calculated $a_1, \ldots, a_{i - 1}$.
    Then calculate the corner minor $\Delta_i$ with the size $i \times i$
    assuming that a zero is on the $i$-th place on the main diagonal of the matrix $M'$.
    Put $a_i = 1 + \Delta_i$ on the $i$-th place on the diagonal.

    There is an algorithm computing the determinant of a $m \times m$ matrix with the complexity of $O(m^3)$ \cite{Gauss}.
    Then, since the presented algorithm is essentially a computation of the determinants of $n$ submatrices
    with sizes $1 \times 1, 2 \times 2, \ldots, n \times n$, its complexity is $O(1^3 + 2^3 + \ldots + n^3) = O(n^{4})$.

    \emph{Part 2. Proof that the algorithm gives the required numbers.}

    Let us prove by induction by $i$ that after step $i$ of the first part of the algorithm
    the corner minor $\Delta_i'$ of $M'$ with the side $i$ is equal to $1$.

    \emph{Base. $i = 1$.} $\Delta_i = 0 \Rightarrow$ we put $1$ in the first diagonal element. Hence $\Delta_i' = 1$.

    \emph{Step. $i \rightarrow i + 1$.} By the induction hypothesis $\Delta_i' = 1$.
    By the decomposition formula of the determinant $\Delta_{i + 1}'$ by the last row of the corresponding submatrix of $M'$
    \begin{equation*}
        \Delta_{i + 1}' = \Delta_{i + 1} + (1 + \Delta_{i + 1})\Delta_i' = \Delta_{i + 1} + (1 + \Delta_{i + 1}) = 1
    \end{equation*}

    Thus, $\det M' = \Delta_n' = 1$.
\end{proof}

A matrix is called \textbf{diagonal} if all its entries outside of the main diagonal are equal to $0$.

\begin{lemma}\label{rk-est}
    Let $M, D$ be matrices of the same size with entries in $\mathbb{Z}_2$
    such that $D$ is diagonal.
    Then $\rk(M + D) \geq \rk M - \rk D$.
\end{lemma}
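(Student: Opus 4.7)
The plan is to derive this from the standard subadditivity of matrix rank, namely that $\rk(X + Y) \leq \rk X + \rk Y$ for any two matrices $X, Y$ of the same size over a field. The diagonality of $D$ plays no role; I will not use it.

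First I would prove subadditivity by a column-space argument. If $\{c_1, \ldots, c_r\}$ is a basis of the column space of $X$ and $\{d_1, \ldots, d_s\}$ a basis of the column space of $Y$, then every column of $X + Y$ is a sum of a vector in $\mathrm{span}(c_1, \ldots, c_r)$ and a vector in $\mathrm{span}(d_1, \ldots, d_s)$, hence lies in the span of the combined $r + s$ vectors. Therefore $\rk(X + Y) \leq r + s = \rk X + \rk Y$.

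Next I would apply subadditivity in the form
\begin{equation*}
    \rk M = \rk\bigl((M + D) + D\bigr) \leq \rk(M + D) + \rk D,
\end{equation*}
using that over $\mathbb{Z}_2$ one has $D + D = 0$, so $(M + D) + D = M$. Rearranging gives $\rk(M + D) \geq \rk M - \rk D$, which is the desired inequality.

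There is no real obstacle here; the lemma is a one-line consequence of subadditivity together with the characteristic $2$ identity $D + D = 0$. The only thing to be careful about is that the argument uses $-D = D$, which is precisely the feature of $\mathbb{Z}_2$ that lets the same inequality be recovered without needing a signed rank statement.
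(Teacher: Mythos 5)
Your proof is correct, but it takes a genuinely different route from the paper's. You derive the inequality from the general subadditivity of rank, $\rk(X+Y)\leq \rk X+\rk Y$, applied with $X=M+D$ and $Y=D$, using $D+D=0$ in characteristic $2$; this never uses the hypothesis that $D$ is diagonal, so your argument actually proves the stronger statement that $\rk(M+D)\geq \rk M-\rk D$ for an arbitrary matrix $D$ over $\mathbb{Z}_2$ (and, replacing $D$ by $-D$, over any field). The paper instead exploits diagonality directly with a row-counting argument: since $D$ is diagonal, it has exactly $\rk D$ nonzero rows, so adding it to $M$ alters only $\rk D$ rows; among any $\rk M$ linearly independent rows of $M$, at least $\rk M-\rk D$ are left untouched and remain a linearly independent set of rows of $M+D$. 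What each approach buys: yours is the standard one-line reduction to a well-known fact and makes clear the diagonal hypothesis is superfluous; the paper's is self-contained and concretely combinatorial, requiring no auxiliary lemma about rank subadditivity. Either is acceptable here, since the lemma is only ever applied with diagonal $D$.
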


\begin{proof}
    Since $D$ has $\rk D$ nonzero rows, upon addition to $M$ it changes $\rk D$ rows.
    Since there are $\rk M$ linearly independent rows in the matrix $M$
    and at least $\rk M - \rk D$ of them remain the same after the addition of $D$, it follows that
    there are $\rk M - \rk D$ linearly independent rows in the matrix $M + D$.
    Hence $\rk (M + D) \geq \rk M - \rk D$.
\end{proof}

\begin{proof}[Proof of Theorem \ref{main}]
    The proof consists of two parts: a description and an estimation of the complexity of the algorithm
    and a proof that the algorithm gives the right answer.

    \emph{Part 1. Description and estimation of the complexity of the algorithm.}

    Since $k$ is fixed, it is sufficient to estimate the complexity of the algorithm for $n \geq 2k$.

    Apply the algorithm given by Lemma \ref{non-degenerate-algo}.
    Denote by $M'$ the non-degenerate matrix given by the applied algorithm.

    There is an algorithm searching through all diagonal $n \times n$ matrices
    with $l$ zeroes and $n - l$ identities on the main diagonal
    with the complexity of $O\left(n\binom{n}{l}\right)$.
    Hence there is an algorithm searching through all diagonal $n \times n$ matrices
    with $\leq k$ zeroes on the main diagonal
    with the complexity of
    \begin{multline*}
        O\left(n\binom n0 + n\binom n1 + \ldots + n\binom nk\right) \\
        = O\left(n(k + 1)\binom n{\min(n/2, k)}\right)
        = O\left(n(k + 1)\binom nk\right)\\
        = O\left(n\cdot n^{k}\right)
        = O\left(n^{k + 1}\right).
    \end{multline*}

    Apply the algorithm searching through all diagonal $n \times n$ matrices with $\leq k$ zeroes on the main diagonal.
    For every such matrix $D$ calculate $\rk(M' + D)$.
    If for at least one such matrix $D$ the rank of $M' + D$ is less than or equal to $k$
    then return that it is possible to achieve a rank $\leq k$.
    Otherwise return that it is not possible to achieve a rank $\leq k$.

    For each matrix $M' + D$ where $D$ is a diagonal matrix with $\leq k$ zeroes on the main diagonal
    the algorithm described above calculates $\rk (M' + D)$.
    Since the rank of a matrix can be calculated by an algorithm with the complexity of $O(n^3)$,
    the total complexity of the algorithm described in the previous paragraph
    is $O(n^{k + 1} n^3) = O(n^{k + 4})$. Thus, the complexity of the whole algorithm is $O(n^4) + O(n^{k + 4}) = O(n^{k + 4})$.

    \emph{Part 2. Proof that the algorithm gives the right answer.}

    Note that any matrix $M''$ formed as a result of putting numbers from $\mathbb{Z}_2$ in the elements on the main diagonal of $M$
    can be uniquely represented as the sum $M' + D$ where $D$ is a diagonal matrix.
    By Lemma \ref{rk-est} every diagonal matrix $D$ with $l < n - k$ identities on the main diagonal satisfies
    \begin{equation*}
        \rk (M' + D) \geq \rk M' - \rk D = n - l > n - (n - k) = k.
    \end{equation*}
    Thus, it is sufficient to search through matrices $D$
    with at least $n - k$ identities on the main diagonal which is exactly what the algorithm does.
\end{proof}

\begin{lemma}\label{min-rk-est}
    Let $M$, $D$ be matrices of the same size with entries in $\mathbb{Z}_2$
    such that $M$ is non-degenerate and $D$ is diagonal.
    Then $\rk (M + D) \geq \rk (M + I) / 2$ where $I$ is the identity matrix.
\end{lemma}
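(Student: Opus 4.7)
The plan is to apply Lemma \ref{rk-est} twice to $M+D$, using two different decompositions, and then add the resulting inequalities. The key observation is that $D$ and $I+D$ are both diagonal matrices whose ranks sum to $n$, so writing $M+D$ two different ways as (full-rank matrix) $+$ (diagonal matrix) will produce two bounds whose sum is free of the nuisance quantity $\rk D$.

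First I would apply Lemma \ref{rk-est} directly to the pair $(M, D)$. Since $M$ is non-degenerate, $\rk M = n$, so
\begin{equation*}
    \rk(M + D) \geq n - \rk D.
\end{equation*}
Second, I would use the identity $M + D = (M + I) + (I + D)$, valid over $\mathbb{Z}_2$ because $I + I = 0$. Here $I + D$ is a diagonal matrix whose diagonal entries are the bitwise complements of those of $D$, so $\rk(I + D) = n - \rk D$. Applying Lemma \ref{rk-est} to the pair $(M + I, I + D)$ gives
\begin{equation*}
    \rk(M + D) \geq \rk(M + I) - (n - \rk D).
\end{equation*}

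Adding the two inequalities, the $n$ and $\rk D$ terms cancel, yielding $2\rk(M+D) \geq \rk(M+I)$, which is exactly the claim. There is no substantive obstacle here once the right decomposition is spotted; the only thing worth double-checking is the trivial computation $\rk(I+D) = n - \rk D$ (which holds because diagonal $\mathbb{Z}_2$-matrices have rank equal to the number of ones on the diagonal, and $I + D$ flips each bit of $D$).
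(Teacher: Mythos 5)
Your proof is correct and follows essentially the same route as the paper: both apply Lemma \ref{rk-est} to the pairs $(M, D)$ and $(M+I, I+D)$ and sum the two bounds so that the $\rk D$ terms cancel. The paper merely writes the two applications as a single chain starting from $2\rk(M+D)$, which is a purely cosmetic difference.
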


\begin{proof}
    Denote by $n$ the amount of columns of $M$ and $D$.
    By Lemma \ref{rk-est} we have
    \begin{multline*}
        2\rk(M + D) = \rk(M + D) + \rk((M + I) + (I + D)) \\
        \geq (\rk M - \rk D) + (\rk (M + I) - \rk (I + D)) \\
        = n - \rk D + \rk (M + I) - (n - \rk D) = \rk (M + I).
    \end{multline*}
\end{proof}

\begin{proof}[Proof of Theorem \ref{main-approx}]
    Let us describe the algorithm.

    Apply the algorithm given
    by Lemma \ref{non-degenerate-algo}
    to the matrix $M$.
    Denote by $M'$ the matrix obtained by this algorithm.
    Denote $k := \rk (M' + I)$. The number $k$ can be computed in time $O(n^3)$.

    By Lemma \ref{min-rk-est} and the fact that $M' + I$ can be obtained from $M$
    by changing some numbers on the main diagonal,
    \begin{equation*}
        k/2 \leq R(M) \leq k
    \end{equation*}
    as required.

    The total complexity of the algorithm is $O(n^4) + O(n^3) = O(n^4)$.
\end{proof}

\begin{remark}

    Let $M$ be an $n \times n$ matrix with entries in $\mathbb{Z}_2$.
    Then $R(M) \leq n - 1$.

    \begin{proof}
        If we change the numbers on the main diagonal of $M$
        so that the sum of the entries in each row is even,
        the resulting matrix $M'$ will obviously be degenerate.
        Hence $\rk M' \leq n - 1$ and $R(M) \leq n - 1$.


    \end{proof}

\end{remark}

\appendix

\section{Appendix}
\setcounter{theorem}{0}
\renewcommand{\thetheorem}{A\arabic{theorem}}


This appendix
describes the applications of the main results to graph drawings on non-orientable surfaces
(Corollaries \ref{main-top}, \ref{main-top-approx}).
The problem of graph drawings on non-orientable surfaces has been extensively studied
(for example, see \cite{Mohar}).
See the definitions of an hieroglyph, weak realizability, the disk with $k$ Möbius strips below.
Our results are different from \cite{Mohar-article} because
there, realizability is studied which comes down to calculating $\rk M$
(see Theorem \ref{Mohar}),
and we study weak realizability which comes down to calculating $R(M)$.
The paper \cite{topology-base} gives an algorithmic criterion
for the weak realizability of an hieroglyph on the Möbius strip (the disk with 1 Möbius strip).

Let us introduce the notion of weak realizability.

The \textbf{disk with $k$ Möbius strips}
is the figure shown on the left of fig. \ref{fig:disk-example}.\footnote{
  Since any nonorientable 2-surface of nonorientable genus $k$ is homeomorphic to the disk with $k$ Möbius strips,
  in this definition
  the term ``disk with $k$ Möbius strips'' can be replaced with ``non-orientable 2-surface of nonorientable genus $k$''.
}
More precisely, the disk with $k$ Möbius strips
is the union of a disk
and $k$ pairwise disjoint ribbons having their ends glued to $2k$
pairwise disjoint arcs on the boundary circle of the disk
(the ribbons do not have to lie in the plane of the disk)
so that
\begin{enumerate}
    \renewcommand{\labelenumi}{(\theenumi)}
    \renewcommand{\theenumi}{\alph{enumi}}
        \item the orientations of the ends of each ribbon
    given by an orientation of the boundary circle of the disk
    have ``the same direction along the ribbon'', and
        \item the ribbons are ``separated'',
    i.\,e. there are $k$ pairwise disjoint arcs $A_i$ of the boundary circle of the disk
    such that the ends of the $i$-th ribbon are glued to two disjoint arcs
    contained in $A_i$ ($i = 1, 2, \ldots, k$).
\end{enumerate}

\begin{figure}[h]
    \centering
    \includegraphics{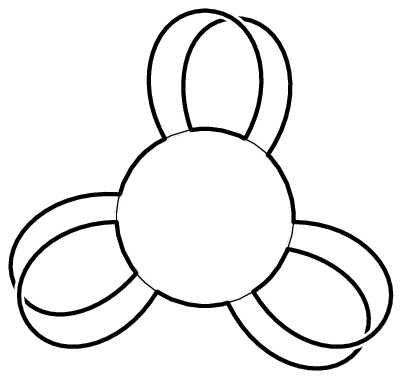}
    \ \ \ \includegraphics{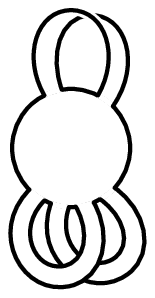}
    \caption{On the left: the disk with 3 Möbius strips
      which is a disk with ribbons corresponding to the hieroglyph $aabbcc$.
      On the right: a disk with ribbons corresponding to the hieroglyph $aabcbc$ which is not the disk
      with 3 Möbius strips (because the bottom ribbons violate each condition of
      the definition of the disk with $k$ Möbius strips)
      but is homeomorphic to the disk with $3$ Möbius strips.}
    \label{fig:disk-example}
\end{figure}

An \textbf{hieroglyph} on $n$ letters is an unoriented cyclic letter sequence of length $2n$
such that each letter from the sequence appears in the sequence twice.

Take a hieroglyph $H$ on $n$ letters.
Take a convex polygon with $2n$ sides.
Put the letters in the hieroglyph
on the sides of the convex polygon in the nonoriented cyclic order.
For each letter glue the ends of a ribbon to the pair of sides corresponding to the letter
so that the glued ribbons are pairwise disjoint.
Call the resulting surface a \textbf{disk with ribbons} corresponding to the hieroglyph $H$
(see fig. \ref{fig:disk-example}).
Since each ribbon can be either twisted or not twisted,
several surfaces may correspond to a single hieroglyph.

A hieroglyph $H$ is \textbf{weakly realizable} on the disk with $k$ Möbius strips
if some disk with ribbons corresponding to $H$ can be cut out of the disk with $k$ Möbius strips.

For a hieroglyph $H$ denote by $R(H)$ the minimal number $k$ such that the hieroglyph $H$
is weakly realizable on the disk with $k$ Möbius strips.
Such a number exists by the classification theorem for compact surfaces with boundary.

\begin{corollary}\label{main-top}
    For each non-negative integer $k$
    there is an algorithm with the complexity of $O(n^{k + 4})$
    which for an arbitrary hieroglyph $H$ on $n$ letters
    decides whether $R(H) \leq k$.
\end{corollary}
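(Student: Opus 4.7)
The plan is to reduce weak realizability of an hieroglyph on the disk with $k$ Möbius strips to the matrix rank problem handled by Theorem \ref{main}, by passing to the standard intersection matrix.

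First I would associate to the hieroglyph $H$ on $n$ letters its \emph{intersection matrix} $M(H) \in M_n(\mathbb{Z}_2)$: for distinct letters $a, b$ set $M(H)_{ab} = 1$ iff the letters $a$ and $b$ interleave (i.\,e. appear in the cyclic pattern $\ldots a \ldots b \ldots a \ldots b \ldots$) in $H$, and let the diagonal entries be arbitrary, as they will encode whether each ribbon is glued with a twist. Constructing $M(H)$ from $H$ takes $O(n^2)$ time (scan the word once to record the two positions of every letter, then test each pair).

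Next I would establish the identity $R(H) = R(M(H))$. Let $D \in M_n(\mathbb{Z}_2)$ be diagonal, and let $S_D$ denote the disk with ribbons corresponding to $H$ in which the $i$-th ribbon is twisted iff $D_{ii} = 1$. The required topological fact is that the nonorientable genus of $S_D$ equals $\rk (M(H) + D)$; this is the weak-realizability counterpart of Theorem \ref{Mohar} cited in the excerpt, and it can be extracted from \cite{topology-base, Mohar-article}. By the classification of compact surfaces with boundary, $S_D$ can be cut out of the disk with $k$ Möbius strips iff its nonorientable genus is at most $k$. Taking the minimum over twist choices $D$ yields $R(H) = R(M(H))$.

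Finally I would invoke the algorithm of Theorem \ref{main} on $M(H)$, deciding in time $O(n^{k+4})$ whether $R(M(H)) \leq k$, hence whether $R(H) \leq k$. The total cost is $O(n^2) + O(n^{k+4}) = O(n^{k+4})$, matching the claim. The main obstacle is the identity $R(H) = R(M(H))$: the off-diagonal part is the classical interlacement matrix of a chord diagram, and the free diagonal entries exactly match the $2^n$ twist patterns of the $n$ ribbons, but one has to verify carefully that the $\mathbb{Z}_2$-rank of the sum captures the nonorientable genus of $S_D$. The remaining steps are routine applications of Theorem \ref{main} preceded by a cheap $O(n^2)$ preprocessing.
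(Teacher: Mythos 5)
Your proposal is correct and follows essentially the same route as the paper: build the interlacement (overlap) matrix in $O(n^2)$, invoke the identity $R(H) = R(M)$ (which the paper states as Theorem \ref{Mohar}, a corollary of Mohar's result), and run the algorithm of Theorem \ref{main}. The only difference is that you sketch a justification of $R(H)=R(M)$ via the nonorientable genus of the disks with ribbons, whereas the paper simply cites this as a known theorem.
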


\begin{corollary}\label{main-top-approx}
    There is an algorithm with the complexity of $O(n^4)$
    which for an arbitrary hieroglyph $H$ on $n$ letters
    computes a number $k$ such that
    \begin{equation*}
        k/2 \leq R(H) \leq k.
    \end{equation*}
\end{corollary}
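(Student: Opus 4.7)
The plan is to reduce Corollary \ref{main-top-approx} to Theorem \ref{main-approx} via the natural interlacement matrix of a hieroglyph. To a hieroglyph $H$ on $n$ letters I would associate an $n \times n$ matrix $M(H) \in M_n(\mathbb{Z}_2)$ whose off-diagonal entry $M(H)_{ij}$ equals $1$ exactly when the two occurrences of letter $i$ separate the two occurrences of letter $j$ along the cyclic sequence, and whose diagonal entries are set to $0$. This matrix can be read off from the cyclic word in $O(n^2)$ time: for each of the $\binom{n}{2}$ pairs of letters, a single scan of the word decides whether they interleave.

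The heart of the reduction is the identity $R(H) = R(M(H))$. Intuitively, fixing an assignment of which of the $n$ ribbons in a disk-with-ribbons realization is twisted corresponds exactly to fixing the $n$ diagonal entries of $M(H)$, while the off-diagonal entries are determined by the cyclic order of the letters and record interlacement independently of twisting. The $\mathbb{Z}_2$-rank of the resulting matrix then equals the non-orientable genus of the thickened ribbon surface, so the minimum such genus over all twisting choices is exactly $R(M(H))$. This is the analogue, for weak realizability, of the rank-equals-genus statement used in \cite{Mohar-article} for strict realizability; I would state it as a separate lemma and prove it by tracking the mod-$2$ intersection form on $H_1$ of the thickened surface as ribbons are twisted.

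Granting this identity, the algorithm is straightforward: construct $M(H)$ from the hieroglyph in $O(n^2)$ time, then invoke the algorithm of Theorem \ref{main-approx} on $M(H)$ in $O(n^4)$ time, and return its output $k$. Correctness is immediate from
\begin{equation*}
    k/2 \leq R(M(H)) \leq k
\end{equation*}
combined with $R(H) = R(M(H))$. The total complexity is $O(n^2) + O(n^4) = O(n^4)$, as required.

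The main obstacle is the topological identity $R(H) = R(M(H))$; once this is set up correctly, everything else reduces to a direct application of Theorem \ref{main-approx}. The verification amounts to inspecting how twisting a single ribbon alters only the corresponding diagonal entry of the mod-$2$ intersection form, which follows from the classification of compact surfaces with boundary together with the standard description of $H_1$ of the thickening in terms of ribbon cores; the off-diagonal entries, being interlacement numbers, depend only on the cyclic word and are unaffected by twisting.
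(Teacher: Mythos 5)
Your reduction is exactly the paper's: build the overlap (interlacement) matrix in $O(n^2)$, apply Theorem \ref{main-approx}, and conclude via the identity $R(H) = R(M(H))$. The only difference is that the paper does not prove that identity itself but cites it as Theorem \ref{Mohar}, a consequence of Mohar's work, so your sketched topological argument for it is extra (and would need to be completed if you did not want to cite the literature).
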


Two letters $a, b$ in a hieroglyph $H$ \textbf{overlap in $H$} if they interlace in the cyclic sequence of the hieroglyph
(i.e. they appear in the sequence in the order $abab$ but not $aabb$).
Take an $n \times n$ matrix with entries in $\mathbb{Z}_2$ with zeroes on the main diagonal.
Put $1$ in the cell $(i, j)$ for $i \neq j$ if the letters $i, j$ overlap in $H$ and $0$ otherwise.
Call the resulting matrix
the \textbf{overlap matrix} of the hieroglyph $H$.

\begin{theorem}[Mohar]\label{Mohar}
    Let $M$ be the overlap matrix of a hieroglyph $H$.
    Then $R(H) = R(M)$.
\end{theorem}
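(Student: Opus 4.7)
The plan is to match each choice of ribbon twistings of a disk with ribbons corresponding to $H$ with a choice of diagonal entries for the matrix $M$, and then to show that the $\mathbb{Z}_2$-rank of the resulting matrix computes exactly the non-orientable genus of the associated surface.

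For each twisting vector $d = (d_1, \ldots, d_n) \in \mathbb{Z}_2^n$, let $S_d$ denote the disk with ribbons corresponding to $H$ in which the $i$-th ribbon is twisted precisely when $d_i = 1$. Let $D_d$ be the diagonal matrix with entries $d_1, \ldots, d_n$ and set $M_d := M + D_d$. The first step is to reduce weak realizability to a statement about the surfaces $S_d$: since a disk with ribbons corresponding to $H$ is determined, up to homeomorphism rel the boundary-attachment data, by the twistings of the ribbons, the hieroglyph $H$ is weakly realizable on the disk with $k$ Möbius strips if and only if some $S_d$ embeds in it. By the classification of compact surfaces with boundary, this embeddability holds exactly when the non-orientable genus of $S_d$ is at most $k$, since the number of boundary components and the orientability class of $S_d$ impose no further obstruction once the genus inequality is satisfied.

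The core step is to establish the identity
\begin{equation*}
    \text{non-orientable genus of } S_d \;=\; \rk(M_d).
\end{equation*}
I would prove this by computing $H_1(S_d; \mathbb{Z}_2)$ from the natural cell decomposition of $S_d$, with one 2-cell for the disk and one 2-cell for each ribbon. The loops given by the cores of the ribbons form a basis of $H_1(S_d; \mathbb{Z}_2)$, and the crux is to verify that the $\mathbb{Z}_2$-intersection form with respect to this basis is precisely $M_d$: an off-diagonal entry $(i,j)$ counts interlacings of letters $i$ and $j$ in $H$, because two ribbon cores meet transversely mod $2$ exactly when their endpoints on the disk boundary interlace; meanwhile, the diagonal entry $i$ records whether ribbon $i$ is twisted, since the core of a twisted ribbon has nontrivial self-intersection over $\mathbb{Z}_2$ while the core of an untwisted ribbon does not. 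Since the $\mathbb{Z}_2$-rank of the intersection form on $H_1$ of a compact surface with boundary equals its non-orientable genus when $S_d$ is non-orientable, and equals twice its orientable genus when $S_d$ is orientable (where the form is symplectic and of even rank), the required identity holds in both regimes.

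Combining the two steps,
\begin{equation*}
    R(H) \;=\; \min_d\, (\text{non-orientable genus of } S_d) \;=\; \min_d\, \rk(M_d) \;=\; R(M),
\end{equation*}
as required. The main obstacle is the intersection-form computation in the core step: one must fix orientation conventions on the boundary arcs of the ribbons so that interlacing translates exactly to intersection number $1 \pmod 2$, and then carefully justify that the self-intersection contribution of each core loop is detected precisely by the twist parity of its ribbon. Once this is in place, the embeddability characterization via non-orientable genus follows from the classification theorem for compact surfaces with boundary and should not require substantial additional work.
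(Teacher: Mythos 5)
The paper does not actually prove Theorem \ref{Mohar}; it quotes it as a corollary of \cite[Theorem 3.1]{Mohar-article}. Your reconstruction via the mod $2$ intersection form on $H_1(S_d;\mathbb{Z}_2)$ is the right framework, and the identification of that form with $M_d$ (interlacing for the off-diagonal entries, twist parity for the diagonal entries, ribbon cores as a basis) is correct.

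The genuine gap is the claim that ``the orientability class of $S_d$ imposes no further obstruction'' to cutting $S_d$ out of the disk with $k$ M\"obius strips. It does. When $d=0$ and $S_0$ is orientable of genus $h\ge 1$, the rank of the intersection form is $2h=\rk M$, yet $S_0$ cannot be cut out of the disk with $2h$ M\"obius strips; the smallest admissible $k$ is $2h+1$. Concretely, for $H=abab$ with both ribbons untwisted, $S_0$ is a once-punctured torus with $\rk M_0=2$, but it does not embed in the Klein bottle minus a disk: an Euler characteristic count shows its complement in the Klein bottle would be a disk, forcing the Klein bottle to be a torus. So your chain $R(H)=\min_d(\cdot)=\min_d\rk(M_d)$ fails at the term $d=0$ whenever $S_0$ has positive genus. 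The inequality $R(H)\ge R(M)$ survives (Euler genus is monotone under taking subsurfaces), but to get $R(H)\le R(M)$ you must additionally prove that for every symmetric $M$ with zero diagonal and $\rk M=2h\ge 2$ there is a \emph{nonzero} diagonal $D$ with $\rk(M+D)\le 2h$ --- equivalently, that the minimum defining $R(M)$ is attained at some non-orientable $S_d$ unless $M=0$. This is true but not automatic, and it is exactly the point where the orientable and non-orientable cases of Mohar's genus formula must be reconciled; as written, your argument substitutes a false embeddability criterion for that step.
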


Theorem \ref{Mohar} is a corollary of \cite[Theorem 3.1]{Mohar-article}
(see also \cite[\S 2.8, statement 2.8.8(c)]{Skopenkov}).

The following Lemma \ref{top2mat-algo} is well known.

\begin{lemma}\label{top2mat-algo}
    There is an algorithm with the complexity of $O(n^2)$
    which for an arbitrary hieroglyph on $n$ letters constructs its overlap matrix.
\end{lemma}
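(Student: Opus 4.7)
The plan is to preprocess the hieroglyph so that the interlacing of any two letters can be checked in constant time, after which filling out all $n^2$ entries of the overlap matrix is trivially $O(n^2)$.

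First I would fix any linear representative of the unoriented cyclic sequence of length $2n$ (interlacing of two pairs of points on a circle is invariant under rotation and reflection, so the choice is immaterial). Scanning this sequence once, in time $O(n)$, I record for each letter $i \in \{1, \ldots, n\}$ the two positions $p_i < q_i$ at which it occurs; this gives an array of length $n$ storing two integers per letter.

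Next, for every ordered pair $(i, j)$ with $i \neq j$ I decide in $O(1)$ whether the letters $i$ and $j$ overlap in $H$, using the well known criterion that they interlace in the cyclic sequence if and only if exactly one of the positions $p_j, q_j$ lies strictly between $p_i$ and $q_i$ in the linear order (equivalently, $p_i < p_j < q_i < q_j$ or $p_j < p_i < q_j < q_i$). I write $1$ in the cell $(i, j)$ if this test succeeds and $0$ otherwise, and I fill the main diagonal with zeros. There are $n^2$ cells and each is computed in constant time, so the whole table is assembled in $O(n^2)$ time, which dominates the $O(n)$ preprocessing.

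Since neither step involves a genuine obstacle, there is nothing really hard to overcome; the only point that needs a remark is correctness of the $O(1)$ interlacing test, which follows because cutting the cyclic sequence at any point distinct from the four occurrences of $i$ and $j$ yields the same answer (interlacing is a topological property of two pairs of points on a circle). The total complexity is therefore $O(n^2)$, as required.
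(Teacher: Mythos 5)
Your proof is correct and takes essentially the same approach as the paper: both rest on the observation that letters $i$ and $j$ overlap if and only if exactly one occurrence of $j$ lies between the two occurrences of $i$, and both fill the $n^2$ cells in $O(n^2)$ total time. The only difference is an implementation detail (you precompute the two positions of each letter and test each pair in $O(1)$, whereas the paper traverses one of the two arcs determined by each letter $i$ and records the letters met once), which does not change the argument.
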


\begin{proof}
    The following is a rough description of the algorithm.

    Take an $n \times n$ matrix $M$ with zeroes on the main diagonal.
    For each letter $i$ do the following.
    The two occurences of $i$ split the hieroglyph into two sequences of letters $A_i$ and $B_i$.
    Go through all the letters in any one sequence $A_i$ or $B_i$.
    For each occurence of such a letter $j$ add $1$ to the cell $(i, j)$ of $M$.
    Return the resulting matrix.

    The complexity of the presented algorithm is obviously $O(n^2)$.

    The algorithm gives the required matrix since the letters $i, j$ overlap
    if and only if $j$ appears in the sequence $A_i$ or $B_i$ an odd number of times (once).


\end{proof}

\begin{proof}[Proof of Corollary \ref{main-top}]
    The following is a description of the algorithm.

    Denote by $M$ the overlap matrix of the hieroglyph $H$.
    It can be calculated by the algorithm given by Lemma \ref{top2mat-algo}.
    Apply the algorithm given by Theorem \ref{main} to $M$.
    Return the result of the last applied algorithm.

    The total complexity of the presented algorithm is $O(n^2) + O(n^{k + 4}) = O(n^{k + 4})$.

    The presented algorithm gives the right answer by Theorem \ref{Mohar}.
\end{proof}

The \emph{proof of Corollary \ref{main-top-approx}}
can be obtained from the proof of Corollary \ref{main-top}
by replacing Theorem \ref{main} and $O(n^{k + 4})$
by Theorem \ref{main-approx} and $O(n^4)$, respectively.


\begin{remark}
    For a multigraph $G$, a \textbf{half-edge} of $G$ is an orientation of one of its edges.
    For a vertex $v$ of a multigraph
    an (oriented or unoriented) \textbf{local rotation at $v$}
    is an (oriented or unoriented) cyclic ordering
    of the half-edges incident to $v$.
    A multigraph $G$ with a family of unoriented local rotations at its vertices
    is called \textbf{weakly realizable} on the disk with $k$ Möbius strips
    if $G$ can be drawn on the disk with $k$ Möbius strips so that
    for each vertex $v$ of $G$ the local orientation at $v$ matches
    the cyclic ordering at which the half-edges incident to $v$
    intersect the boundary of a small circle around $v$.
    A natural generalization of finding $R(H)$ for a hieroglyph $H$
    is finding the least integer $k$
    such that a given multigraph
    with a family of unoriented local rotations at its vertices
    is weakly realizable on the disk with $k$ Möbius strips.
    However, this problem cannot be reduced to the problem of finding $R(H)$
    by contracting spanning trees of the components of the multigraph
    because upon contracting an edge $(v, u)$,
    it is unclear how to combine the unoriented local orientations at $v$ and $u$.
\end{remark}

\end{document}